\newcommand{\Gal}{\mathrm{Gal}}
\newcommand{\Br}{\mathrm{Br}}
\newcommand{\CH}{\mathrm{CH}}
\newcommand{\CHL}{\mathrm{CH}_\mathrm{L}}
\newcommand{\T}{\mathrm{T}}
\newcommand{\Q}{\mathbb{Q}}
\newcommand{\Z}{\mathbb{Z}}
\newcommand{\G}{\mathbb{G}}
\newcommand{\F}{\mathbb{F}}
\newcommand{\et}{\textrm{et}}
\newcommand{\tensor}{\otimes}
\renewcommand{\phi}{\varphi}
\renewcommand{\theta}{\vartheta}
\renewcommand{\epsilon}{\varepsilon}
\newcommand{\isom}{\cong}
\newcommand{\h}[1]{\mathrm{H}_{\operatorname{et}}^{#1}}
\newcommand{\hl}[1]{\mathrm{H}_\textrm{L}^{#1}}
\newcommand{\hm}[1]{\mathrm{H}_\textrm{M}^{#1}}
\newcommand{\coker}{\operatorname{coker}}
\newcommand{\GalGrp}{G}
\newcommand{\ch}{\operatorname{char}}
\newcommand{\Hom}{\operatorname{Hom}}
\newcommand{\tensorpow}[2]{#1^{\tensor#2}}
\newcommand{\TC}{\mathrm{TC}}
\newtheorem{theorem}{Theorem}[section]
\newtheorem*{theorem*}{Theorem}
\newtheorem{lemma}[theorem]{Lemma}
\newtheorem{proposition}[theorem]{Proposition}
\theoremstyle{remark}
\newtheorem{remark}[theorem]{Remark}
\keywords{Tate conjecture, higher Brauer groups, Lichtenbaum cohomology}
\subjclass[2010]{14C25, 14G25, 14F22, 11G05}
\title[Tate Conjecture and Higher Brauer Groups]{Tate Conjecture and Higher Brauer Groups of Abelian Varieties in Characteristic Zero}
\author{Thomas Jahn}
\address{Universit\"at Bielefeld, Fakult\"at f\"ur Mathematik, Postfach 100131, Universit\"atsstr.~25, 33501~Bielefeld, Germany}
\email{tjahn@math.uni-bielefeld.de}
\date{\today}
\begin{document}
\begin{abstract}
  Let \(A\) be an abelian variety over a field finitely generated over \(\Q\).
  We show that the finiteness of the \(\ell\)-primary torsion subgroup of the higher Brauer group is a sufficient criterion for the Tate conjecture to hold.
  Furthermore, we extend methods for computations of transcendental Brauer groups to higher Brauer groups.
\end{abstract}

\maketitle

\section{Introduction}
Let \(X\) be a smooth projective variety over a field \(k\).
The cohomological Brauer groups \(\Br(X)=\h2(X,\G_m)\) appears in several different settings in algebraic geometry.
For example it obstructs the Hasse principle on \(X\) (see \cite{Manin1970}) and it appears in a formula describing Zeta functions (see \cite[Conjecture C]{Tate1966}).

The list of applications also includes obstructions to the Tate conjecture:
Let \(k\) be finitely generated over its prime field and let \(X\) be a smooth projective \(k\)-variety.
The Tate conjecture \(\mathrm{TC}^r(X)_{\mathbb{Q}_\ell}\) in codimension \(r\) at a prime \(\ell\neq\ch(k)\) is the statement that the canonical cycle map \(c^r_{\Q_\ell}:\CH^r(X)\tensor\Q_\ell\to\h{2r}(\overline{X},\Q_\ell(r))^{\GalGrp_k}\) is surjective~\cite{Tate1994}.
Here \(\overline{X}=X\times_k\overline{k}\) denotes the base change to a separable closure \(\overline{k}\) of \(k\) and \(\GalGrp_k=\Gal(\overline{k}/k)\) is the absolute Galois group.
For a surface \(X\) over a finite field \(k\) Tate has shown that \(\mathrm{TC}^1(X)_{\mathbb{Q}_\ell}\) holds if and only if the \(\ell\)-primary torsion subgroup \(\Br(X)\{\ell\}\) of the cohomological Brauer group is finite \cite[Theorem~5.2]{Tate1966}.

This has been generalised using higher Brauer groups which are defined using Lichtenbaum cohomology.
The Lichtenbaum cohomology groups \(\hl{m}(X,\Z(n))\) are defined to be the hyper-cohomology groups of the (unbounded) complex of \'etale sheaves \(\Z_X(n)_\et\) given by Bloch's cycle complex \cite{Bloch1986} on the small \'etale site \(X_\et\).
The higher Brauer groups considered in this note are \(\Br^r(X)=\hl{2r+1}(X,\Z(r))\); as there exists a quasi-isomorphism \(\Z_X(1)_\et\sim\mathbb{G}_m[-1]\), we retrieve the Brauer group \(\Br(X)=\h2(X,\mathbb{G}_m)\isom\hl3(X,\Z(1))=\Br^1(X)\).
For any smooth projective, geometrically integral variety \(X\) over a finite field \(k\) and any prime \(\ell\neq\ch(k)\), the Tate conjecture \(\mathrm{TC}^r(X)_{\Q_\ell}\) holds precisely when the \(\ell\)-primary subgroup of \(\Br^r(X)\) is finite \cite[Theorem\,1.4]{RosenschonSrinivas2013}.

We give a partial answer to the question whether higher Brauer groups provide a similar obstruction to the Tate conjecture when \(k\) is finitely generated over \(\Q\) in Section\,\ref{sec:tateobst} where we prove the following
\begin{theorem*}
  Let \(A\) be an abelian variety over a field \(k\) finitely generated over \(\Q\).
  Fix an integer \(r\) and a prime \(\ell>2r\).
  If the group \(\Br^r(A)\{\ell\}\) is finite, then the Tate conjecture \(\mathrm{TC}^r(A)_{\Q_\ell}\) for \(A\) in codimension \(r\) holds at the prime \(\ell\).
\end{theorem*}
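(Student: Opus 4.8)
The plan is to reinterpret the finiteness of $\Br^r(A)\{\ell\}=\hl{2r+1}(A,\Z(r))\{\ell\}$ as (almost) surjectivity of an $\ell$-adic cycle map at the level of Lichtenbaum cohomology — a largely formal step, compare \cite{RosenschonSrinivas2013} — and then to upgrade this to the genuine Tate conjecture using the exterior-algebra structure of the cohomology of an abelian variety together with Faltings' theorems. It will be convenient to work with the Lichtenbaum Chow group $\CHL^r(A):=\hl{2r}(A,\Z(r))$ and the Lichtenbaum Tate conjecture $\TCL^r(A)_{\Q_\ell}$, the surjectivity of $\CHL^r(A)\tensor\Q_\ell\to\h{2r}(\overline A,\Q_\ell(r))^{\GalGrp_k}$.

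First I would run the coefficient sequence. Multiplication by $\ell^n$ on $\Z_A(r)_\et$, together with the identification $\Z/\ell^n(r)_\et\isom\mu_{\ell^n}^{\tensor r}$ (valid since $\ell$ is invertible on $A$), gives exact sequences $0\to\CHL^r(A)/\ell^n\to\h{2r}(A,\mu_{\ell^n}^{\tensor r})\to\Br^r(A)[\ell^n]\to 0$. Because $A$ is smooth projective over a finitely generated field the middle groups are finite, so passing to the inverse limit (surjective transition maps on the left, Mittag--Leffler in the middle) yields $0\to\CHL^r(A)^\wedge_\ell\to\hcont{2r}(A,\Z_\ell(r))\to\T_\ell\Br^r(A)\to 0$. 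Finiteness of $\Br^r(A)\{\ell\}$ forces $\T_\ell\Br^r(A)=0$, hence $\CHL^r(A)^\wedge_\ell\xrightarrow{\ \sim\ }\hcont{2r}(A,\Z_\ell(r))$; since $\CHL^r(A)$ is dense in its completion and the image $L$ of $\hcont{2r}(A,\Z_\ell(r))$ in the finitely generated $\Z_\ell$-module $\h{2r}(\overline A,\Z_\ell(r))^{\GalGrp_k}$ is itself finitely generated, Nakayama's lemma shows that $\CHL^r(A)\tensor\Q_\ell$ surjects onto $L\tensor\Q_\ell$.

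Next I would prove $L\tensor\Q_\ell=\h{2r}(\overline A,\Q_\ell(r))^{\GalGrp_k}$ and pass from Lichtenbaum cycles to ordinary cycles, obtaining $\TCL^r(A)_{\Q_\ell}$ and then $\TC^r(A)_{\Q_\ell}$. For the equality one must show that the edge map $\hcont{2r}(A,\Q_\ell(r))\to\h{2r}(\overline A,\Q_\ell(r))^{\GalGrp_k}$ of the Hochschild--Serre spectral sequence $\hh p(\GalGrp_k,\h q(\overline A,\Q_\ell(r)))\Rightarrow\hcont{p+q}(A,\Q_\ell(r))$ is onto, i.e.\ that the differentials leaving $\h{2r}(\overline A,\Q_\ell(r))^{\GalGrp_k}$ — which land in $\hh j(\GalGrp_k,\h{2r-j+1}(\overline A,\Q_\ell(r)))$ for $j\geq 2$ — vanish. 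The coefficient modules here are pure of negative weight, $\GalGrp_k$ acts semisimply on $\h1(\overline A,\Q_\ell)$ by Faltings, and a weight-mismatch argument (Poitou--Tate duality over number fields, plus a fibration argument in general, in the style of Jannsen) then gives the vanishing. For the passage to ordinary cycles one uses that $\CH^r(A)\to\CHL^r(A)$ is an isomorphism for $r=1$, and in general differs only by classes assembled from higher étale cohomology of function fields of proper subvarieties of $A$, whose images in $\h{2r}(\overline A,\Q_\ell(r))$ one again controls through the exterior-power description of $\h{2r}(\overline A)$.

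I expect the main obstacle to be the content of the third paragraph. Over a field merely finitely generated over $\Q$ the Hochschild--Serre spectral sequence is long, the groups $\hh1(\GalGrp_k,\h{2r-1}(\overline A,\Q_\ell/\Z_\ell(r)))$ are typically of infinite corank, and one must prevent these ``arithmetic'' low-weight contributions from interfering with the extraction of $\TC^r$ from the finiteness of the ``transcendental'' quotient $\Br^r(A)\{\ell\}$. Both hypotheses of the theorem enter here: the abelian-variety assumption, through Faltings' semisimplicity and the purity of $\h{\ast}(\overline A,\Q_\ell)$; and the inequality $\ell>2r$, which makes the antisymmetriser $\tfrac1{(2r)!}\sum_{\sigma\in S_{2r}}\operatorname{sgn}(\sigma)\,\sigma$ integral at $\ell$, so that $\h{2r}(\overline A,\Z_\ell(r))=\bigwedge^{2r}\h1(\overline A,\Z_\ell)(r)$ becomes a $\GalGrp_k$-equivariant direct summand of $\h1(\overline A,\Z_\ell)^{\tensor 2r}(r)$ already integrally. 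It is this integral splitting that lets the integral finiteness statement about $\Br^r(A)$ be transported down to first cohomology, where Faltings' results apply; reconciling that integral bookkeeping with the purely rational content of the Tate conjecture is the technical heart of the argument.
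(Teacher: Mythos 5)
Your overall skeleton does match the paper's: the Geisser--Levine coefficient sequence \(0\to\CHL^r(A)\tensor\Z/\ell^n\Z\to\h{2r}(A,\tensorpow{\mu_{\ell^n}}{r})\to\Br^r(A)[\ell^n]\to0\), finiteness of \(\Br^r(A)\{\ell\}\) used to control the cokernel of an \(\ell\)-adic Lichtenbaum cycle map, and the passage back to the usual Tate conjecture via \(\CHL^r(A)\tensor\Q\isom\CH^r(A)\tensor\Q\) (your detour through function fields of subvarieties is unnecessary; the rational agreement is all one needs, and it is all the paper uses). The genuine gap is exactly the step you yourself flag as the ``technical heart'': the surjectivity of the restriction map \(\h{2r}(A,\tensorpow{\mu_{\ell^n}}{r})\to\h{2r}(\overline{A},\tensorpow{\mu_{\ell^n}}{r})^{\GalGrp_k}\) (equivalently, of the relevant edge map). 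Your proposed mechanism --- vanishing of the Hochschild--Serre differentials leaving \(E_2^{0,2r}\) by a weight mismatch, Poitou--Tate duality, Faltings semisimplicity, and a Jannsen-style fibration argument --- is not a proof. Over a field finitely generated over \(\Q\) the receiving groups \(\hh{j}(\GalGrp_k,\h{2r-j+1}(\overline{A},\Q_\ell(r)))\) carry negative weights but are not known to vanish (for \(j=2\) this is essentially Jannsen's conjecture, open in general), and a weight mismatch by itself does not kill a differential absent such a vanishing or a strictness statement for the spectral sequence; semisimplicity of \(\h1(\overline{A},\Q_\ell)\) does not supply it either. Your appeal to the integrality of the antisymmetriser for \(\ell>2r\) concerns the Galois-module structure of \(\h{2r}(\overline{A},\Z_\ell)\), not the splitting of the restriction map, so it does not close this gap.

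The paper closes it by quoting \cite[Proposition~2.2]{SkorobogatovZarhin2012}: for an abelian variety and torsion coprime to the relevant factorials (whence the hypothesis \(\ell>2r\)), the map \(\rho^{2r,r}_{\ell^n}:\h{2r}(A,\tensorpow{\mu_{\ell^n}}{r})\to\h{2r}(\overline{A},\tensorpow{\mu_{\ell^n}}{r})^{\GalGrp_k}\) is \emph{split} surjective; the proof rests on functoriality of the multiplication-by-\(m\) endomorphisms of \(A\), which act on \(E_2^{p,q}\) by \(m^q\) and therefore annihilate the differentials up to factors invertible modulo \(\ell^n\) --- this is where both hypotheses (abelian variety, \(\ell>2r\)) actually do their work, already at finite level. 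Given this, the paper's argument is a short diagram chase comparing the coefficient sequences for \(A\) and \(\overline{A}\): \(\coker(\overline{c}^r_{\ell^n})\) is a quotient of \(\Br^r(A)[\ell^n]\), hence uniformly bounded when \(\Br^r(A)\{\ell\}\) is finite, so \(\varprojlim_n\coker(\overline{c}^r_{\ell^n})\) is finite and \(c^r_{\Q_\ell}\) is surjective. This also avoids your completion and \(\varprojlim^1\) bookkeeping, as well as your incorrect assertion that the groups \(\h{2r}(A,\tensorpow{\mu_{\ell^n}}{r})\) are finite over such \(k\) (they are not: already \(\h1(\spec k,\mu_\ell)\isom k^\times/(k^\times)^\ell\) is infinite), although that particular misstep is repairable since the transition maps on the left-hand terms are surjective.
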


For any smooth projective variety \(X\) over a field \(k\) there exists a natural homomorphism \(\Br^r(X)\to\Br^r(\overline{X})\) where again \(\overline{X}=X\times_k\overline{k}\) denotes the base change to a separable closure \(\overline{k}\) of \(k\).
The kernel \(\Br_\textrm{alg}^r(X):=\ker\left(\Br^r(X)\to\Br^r\left(\overline{X}\right)\right)\) is referred to as \emph{algebraic Brauer group} while the quotient \(\Br^r(X)/\Br^r_\textrm{alg}(X)\) is called \emph{transcendental Brauer group}.

We remark that if the characteristic if \(k\) is zero and \(X\) is geometrically integral, the cokernel of \(\Br(X)\to\Br(\overline{X})\) is finite \cite[Th\'eor\`eme\,2.1]{CollioTSkorobogatov2013}.
The question whether this is also true for \(r\geq2\) is open.

If furthermore \(X=A\) is an abelian variety, Skorobogatov and Zarhin have proven (in the classical case, i.e. \(r=1\)) the formula 
\[\Br(A)[n]/\Br_\textrm{alg}(A)[n]\isom\h2(\overline{A},\mu_n)^{\GalGrp_k}/(\CH^1(\overline{A})\tensor\Z/n\Z)^{\GalGrp_k}\]
for any odd integer \(n\) \cite[Corollary\,2.3]{SkorobogatovZarhin2012}, relating the transcendental Brauer group of \(A\) with \'etale cohomology and the Chow group of the base change \(\overline{A}\).
This formula allows some computations of transcendental Brauer groups, see for example \cite{SkorobogatovZarhin2012}.

While we cannot say anything about \(\Br^r(A)\) itself, we prove an analogue of the above formula for higher Brauer groups and discuss how some of the computations of transcendental Brauer groups carry over to higher Brauer groups.

\subsection*{Notations and Conventions.}
For a field \(k\) we fix a separable closure \(\overline{k}\) and denote by \(\GalGrp_k=\Gal(\overline{k}/k)\) its absolute Galois group.
For each \(k\)-variety \(X\) we denote by \(\overline{X}=X\times_k\overline{k}\) the base change to the closure.

For an abelian group \(G\) we denote by \(G[n]\) the kernel of the multiplication by \(n\).
Let \(\ell\) be a prime; the subgroup \(G\{\ell\}=\varinjlim_nG[\ell^n]\) is referred to as the \(\ell\)-primary torsion subgroup.
Taking the projective limit instead gives us the Tate module \(\T_\ell G=\varprojlim_n G[\ell^n]\) which is a torsion free \(\Z_\ell\)-module.

Similarly, for an abelian variety \(A\) over \(k\) we denote by \(A[n]\) the kernel of the multiplication by \(n\) in \(\mathrm{End}(\overline{A})\).

For a group \(G\) and a \(G\)-module \(M\) we define \(M^G\) to be the maximal subgroup of \(M\) being invariant under the action of \(G\).

\section{Lichtenbaum cohomology}
We start with the definitions of Lichtenbaum cohomology, higher Brauer groups and Chow-L groups as used in this paper.

For the entire section let \(X\) be an equi-dimensional smooth quasi-projective variety over a field \(k\) and denote by \(z^n(X,\bullet)\) the cycle complex of abelian groups defined by Bloch \cite{Bloch1986}.
As the assertion
\[z^n(-,\bullet):U\mapsto z^n(U,\bullet)\]

defines a complex of sheaves on the flat site of \(X\), it also defines complexes \(z^n(-,\bullet)_\et\) resp. \(z^n(-,\bullet)_\mathrm{Zar}\) of sheaves on the \'etale resp. Zariski site of \(X\).
We remark, that these complexes are unbounded on the left.
For an abelian group \(A\), we set \(A_X(n)_\et=(z^n(-,\bullet)_\et\tensor A)[-2n]\) and define the Lichtenbaum respectively motivic cohomology groups with coefficients in \(A\) by letting
\begin{align*}
  &\hl{m}(X,A(n)):=\mathbb{H}_\et^m(X,A_X(n)_\et),\quad\textrm{resp.}\\
  &\hm{m}(X,A(n)):=\mathbb{H}_\textrm{Zar}^m(X,A_X(n)_\textrm{Zar})\,.
\end{align*}
The definition of the hypercohomology of an unbounded complex was given in \cite[p. 121]{SuslinVoevodsky2000}.

For Chow groups we have the isomorphisms \(\CH^r(A)\isom\hm{2r}(X,\Z(r))\) which motivates the definition \(\CHL^r(A):=\hl{2r}(X,\Z(r))\) of Chow-L groups.
With rational coefficients motivic cohomology and Lichtenbaum cohomology agree and therefore \(\CHL^r(A)\tensor\Q\isom\CH^r(A)\tensor\Q\).

We define higher Brauer groups as \(\Br^r(X):=\hl{2r+1}(X,\Z(r))\).
The term `higher Brauer groups' is justified by the fact that in weight \(n=1\) we have a quasi-isomorphism \(\Z_X(1)_\et\sim\mathbb{G}_m[-1]\) which implies that \(\Br^1(X)\) and \(\Br(X)\) are isomorphic.

We remark that with rational coefficients Lichtenbaum cohomology groups and motivic cohomology groups coincide and therefore \(\hl{2r+1}(X,\Q(r))\) is trivial.
This implies that the higher Brauer groups \(\Br^r(X)\) is a quotient of \(\hl{2r}(X,\Q/\Z(r))\) and thus a torsion group.

For each prime \(\ell\) multiplication by \(\ell^m\) gives us the exact sequence
\begin{align*}
  0\to \Z_X(n)_\et\overset{\ell^m}\to \Z_X(n)_\et\to(\Z/\ell^m\Z)_X(n)_\et\to0\,.
\end{align*}
For primes \(\ell\) different from the characteristic of \(k\) this sequence together with the quasi-isomorphism \((\Z/\ell^m\Z)_X(n)_\et\sim\tensorpow{\mu_{l^m}}{n}\) constructed by Geisser and Levine \cite[Theorem 1.5]{GeisserLevine2001} furnishes us with short exact universal coefficient sequences
\begin{align*}
  0\to\hl{i}(X,\Z(n))\tensor\Z/\ell^m\Z\to\h{i}(X,\tensorpow{\mu_{\ell^m}}{n})\to\hl{i+1}(X,\Z(n))[{\ell^m}]\to0\,.
\end{align*}
In particular, in bidegree \((i,n)=(2r,r)\) this sequence reads
\begin{align}\label{eq:coeffseqbr}
  0\to\CHL^r(X)\tensor\Z/\ell^m\Z\to\h{2r}(X,\tensorpow{\mu_{\ell^m}}{r})\to\Br^r(X)[{\ell^m}]\to0
\end{align}
and generalises the exact sequence for the classical Brauer group induced by the Kummer sequence.




\section{Tate Conjecture for Abelian Varieties}
\label{sec:tateobst}
Let \(k\) be a field finitely generated over \(\Q\).
Fix a separable closure \(\overline{k}\) of \(k\) and denote by \(\GalGrp_k:=\mathrm{Gal}(\overline{k}/k)\) the absolute Galois group.
For a smooth projective geometrically integral \(k\)-variety we consider the composition of canonical homomorphisms
\begin{align}
  \label{eq:compositioncycle}
  \CHL^r(X)\to\CHL^r(\overline{X})\to\hl{2r}(\overline{X},\Z/\ell^n\Z(r))\overset\isom\to\h{2r}(\overline{X},\mu_{\ell^n}^{\otimes r})
\end{align}
which induces a bunch of homomorphisms
\begin{align*}
c^r_{\ell^n}&:\CHL^r(X)\to\h{2r}(\overline{X},\mu_{\ell^n}^{\otimes r})^{\GalGrp_k}\\
\varprojlim_nc^r_{\ell^n}&:\CHL^r(X)\to\h{2r}(\overline{X},\Z_\ell(r))^{\GalGrp_k}\\
\overline{c}^r_{\ell^n}&:\CHL^r(X)\otimes\Z/\ell^n\Z\to\h{2r}(\overline{X},\mu_{\ell^n}^{\otimes r})^{\GalGrp_k}\\
\varprojlim_n\overline{c}^r_{\ell^n}&:\CHL^r(X)^\wedge\to\h{2r}(\overline{X},\Z_\ell(r))^{\GalGrp_k}
\end{align*}
where \(\CHL^r(X)^\wedge:=\varprojlim_n\CHL^r(X)\otimes\Z/\ell^n\Z\).
The map \(\varprojlim_nc^r_{\ell^n}\) induces a map
\[\varprojlim_nc^r_{\ell^n}\otimes\mathrm{id}_{\Z_\ell}:\CHL^r(X)\otimes\Z_\ell\to\h{2r}(\overline{X},\Z_\ell(r))^{\GalGrp_k}\]
and the image of \(\varprojlim_nc^r_{\ell^n}\otimes\mathrm{id}_{\Z_\ell}\) agrees with the image of \(\varprojlim_n\overline{c}_{\ell^n}^r\).
Moreover, the homomorphism \(\varprojlim_n c_{\ell^n}^r\otimes\mathrm{id}_{\Q_\ell}\) agrees the cycle map \(c_{\Q_\ell}^r:\CH^r(X)\otimes \Q_\ell\to\h{2r}(\overline{X},\Q_\ell(r))^{\GalGrp_k}\).

\begin{theorem}
  \label{thm:tateobst}
  Let \(A\) be an abelian variety over a finite field extension \(k\) of \(\Q\).
  Fix an integer \(r\) and a prime \(\ell>2r\).
  If the group \(\Br^r(A)\{\ell\}\) is finite, then the Tate conjecture \(\mathrm{TC}^r(A)_{\Q_\ell}\) for \(A\) in codimension \(r\) holds at the prime \(\ell\).
\end{theorem}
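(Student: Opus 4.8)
The plan is to translate the finiteness hypothesis on $\Br^r(A)\{\ell\}$ into surjectivity of the cycle map at finite level and then pass to the limit. Starting from the universal coefficient sequence \eqref{eq:coeffseqbr} with $X=A$, and its counterpart for $\overline{A}$, the strategy is: first show that $\Br^r(\overline{A})\{\ell\}$ is \emph{divisible}, so that the finiteness of $\Br^r(A)\{\ell\}$ forces the image of $\Br^r(A)\{\ell\}\to\Br^r(\overline{A})\{\ell\}$ to be zero; second, deduce that $\CHL^r(A)\otimes\Z/\ell^m\Z$ surjects onto a large subgroup of $\h{2r}(\overline A,\mu_{\ell^m}^{\otimes r})^{\GalGrp_k}$; third, take the inverse limit over $m$ and tensor with $\Q_\ell$ to recover $\TC^r(A)_{\Q_\ell}$.

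First I would establish divisibility of $\Br^r(\overline{A})\{\ell\}$. Over the algebraically closed field $\overline{k}$, one has the Künneth-type decomposition for the motivic/Lichtenbaum cohomology of an abelian variety, and the key input is that $\h{*}(\overline A,\Z_\ell)$ is torsion-free (the cohomology of an abelian variety is an exterior algebra on $\h1$, which is a free $\Z_\ell$-module). From the universal coefficient sequence over $\overline A$, torsion-freeness of $\h{2r+1}(\overline A,\Z_\ell(r))$ (equivalently, that $\h{2r+1}(\overline A,\mu_{\ell^m}^{\otimes r})$ maps onto $\Br^r(\overline A)[\ell^m]$ with the $\CHL$-part accounting for the rest) yields that $\Br^r(\overline A)\{\ell\}$ is a quotient of a divisible group, hence divisible; I would also want it to be of finite corank, i.e. a finite sum of copies of $\Q_\ell/\Z_\ell$, which again follows from the finite-rank $\Z_\ell$-cohomology. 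Here is where the hypothesis $\ell>2r$ enters: it guarantees (via Geisser--Levine / the comparison with étale cohomology and Suslin rigidity, or via the fact that $\h{i}(\overline A,\Z_\ell(r))$ has no small torsion in the relevant range for an abelian variety) that there is no unwanted torsion obstructing the argument, so that $\CHL^r(\overline A)$ behaves like $\CH^r(\overline A)$ after $\ell$-adic completion in the range we need.

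Second, because $\Br^r(\overline A)\{\ell\}$ is divisible of finite corank while $\Br^r(A)\{\ell\}$ is finite, the natural map $\Br^r(A)\{\ell\}\to\Br^r(\overline A)\{\ell\}$ is zero; in particular $\Br^r(A)[\ell^m]$ maps to zero in $\Br^r(\overline A)[\ell^m]$ for all $m$. Feeding this into the commutative diagram relating the two universal coefficient sequences \eqref{eq:coeffseqbr}, a diagram chase shows that every class in $\h{2r}(\overline A,\mu_{\ell^m}^{\otimes r})$ coming from $\h{2r}(A,\mu_{\ell^m}^{\otimes r})$ — and more to the point, every Galois-invariant class, after possibly enlarging $k$ by a finite extension, which is harmless for $\TC$ — lifts to $\CHL^r(\overline A)\otimes\Z/\ell^m\Z$. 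Concretely I would argue that the map $\overline c^r_{\ell^m}:\CHL^r(A)\otimes\Z/\ell^m\Z\to\h{2r}(\overline A,\mu_{\ell^m}^{\otimes r})^{\GalGrp_k}$ has cokernel controlled by $\Br^r(A)[\ell^m]$ together with a Galois cohomology term $\h1(\GalGrp_k,-)$ whose contribution vanishes after tensoring with $\Q_\ell$.

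Third, I would pass to the inverse limit: taking $\varprojlim_m$ of the maps $\overline c^r_{\ell^m}$ gives $\varprojlim_m\overline c^r_{\ell^m}:\CHL^r(A)^\wedge\to\h{2r}(\overline A,\Z_\ell(r))^{\GalGrp_k}$ with cokernel a subquotient of $\varprojlim_m\Br^r(A)[\ell^m]=\T_\ell\Br^r(A)$ plus higher $\varprojlim$ terms; since $\Br^r(A)\{\ell\}$ is finite, $\T_\ell\Br^r(A)=0$ and $\varprojlim^1$ vanishes, so this map is surjective. By the compatibility recorded just before the theorem statement, $\varprojlim_m c^r_{\ell^m}\otimes\mathrm{id}_{\Q_\ell}$ is the cycle map $c^r_{\Q_\ell}$ and has the same image as $\varprojlim_m\overline c^r_{\ell^m}\otimes\mathrm{id}_{\Q_\ell}$; tensoring the surjection above with $\Q_\ell$ then gives surjectivity of $c^r_{\Q_\ell}$, which is exactly $\TC^r(A)_{\Q_\ell}$.

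The main obstacle I anticipate is the first step — controlling $\CHL^r(\overline A)$ and $\Br^r(\overline A)$ well enough to know that $\Br^r(\overline A)\{\ell\}$ is divisible of finite corank, since over a non-closed base the Lichtenbaum Chow group $\CHL^r$ need not agree with $\CH^r$ integrally and can carry extra torsion; the hypothesis $\ell>2r$ is precisely what I would use to kill this discrepancy for abelian varieties, presumably by invoking the structure of $\h*(\overline A,\Z/\ell^m(r))$ as an exterior algebra together with a Bockstein argument. A secondary technical point is handling the Galois-cohomology cokernel terms and the $\varprojlim^1$'s, but these are standard once one knows the relevant groups are finitely generated $\Z_\ell$-modules or finite.
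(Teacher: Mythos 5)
Your proposal has two genuine gaps, one logical and one structural. The logical one is the deduction in your second step: a finite group certainly can map nontrivially to a divisible group of finite corank (for example \(\Z/\ell^m\Z\) embeds into \(\Q_\ell/\Z_\ell\)), so ``\(\Br^r(\overline A)\{\ell\}\) divisible and \(\Br^r(A)\{\ell\}\) finite'' does not force the map \(\Br^r(A)\{\ell\}\to\Br^r(\overline A)\{\ell\}\) to vanish; the whole chain you build on this collapses. Moreover, even if classes did lift to \(\CHL^r(\overline A)\otimes\Z/\ell^m\Z\), that is not what \(\TC^r(A)_{\Q_\ell}\) asks for: you need the Galois-invariant classes to come (rationally) from classes over \(k\), and the descent obstruction is exactly the part you dismiss as ``an \(\h1(\GalGrp_k,-)\) term whose contribution vanishes after tensoring with \(\Q_\ell\).'' Over a field finitely generated over \(\Q\) the relevant Hochschild--Serre terms at level \(\ell^m\) are torsion groups of a priori unbounded order, so their inverse limit need not be killed by \(\otimes\Q_\ell\); making this step work is the actual content of the theorem, not a routine remark. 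Your ``enlarge \(k\) by a finite extension'' move is also not harmless here, because the hypothesis is finiteness of \(\Br^r(A)\{\ell\}\) over the given \(k\), and it is not known to persist under base change.

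The structural gap is that you never use the ingredient that carries the proof: Skorobogatov and Zarhin's result that for an abelian variety and \(n\) coprime to the relevant small primes the restriction map \(\rho^{2r,r}_{\ell^n}:\h{2r}(A,\mu_{\ell^n}^{\otimes r})\to\h{2r}(\overline A,\mu_{\ell^n}^{\otimes r})^{\GalGrp_k}\) is \emph{split surjective}. This, not torsion control in \(\CHL^r(\overline A)\), is where the hypotheses ``abelian variety'' and \(\ell>2r\) enter. The paper compares the universal coefficient sequence (\ref{eq:coeffseqbr}) for \(A\) with the \(\GalGrp_k\)-invariants of the one for \(\overline A\); surjectivity of \(\rho^{2r,r}_{\ell^n}\) and a diagram chase show that \(\coker(\overline c^r_{\ell^n})\) is a quotient of \(\Br^r(A)[\ell^n]\) --- the Brauer group over \(k\), which is where the finiteness hypothesis is used directly to bound the cokernels uniformly in \(n\). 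Then \(\varprojlim_n\coker(\overline c^r_{\ell^n})\) is finite and tensoring with \(\Q_\ell\) gives surjectivity of \(c^r_{\Q_\ell}\). Your third step (passing to the limit and comparing with \(c^r_{\Q_\ell}\)) matches the paper's conclusion, but it only works once the finite-level cokernels are controlled by \(\Br^r(A)[\ell^n]\), and your argument never establishes that control.
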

\begin{proof}
  We have the following commutative diagram with exact rows.
    \begin{align*}
    \xymatrix@C=0.5cm@R=0.5cm{
      \CHL^r(A)\otimes\Z/{\ell^n}Z\ar[r]\ar@{=}[d]&\h{2r}(A,\tensorpow{\mu_{\ell^n}}{r})\ar[r]\ar^{\rho^{2r,r}_{\ell^n}}[d]&\Br^r(A)[l^n]\ar[d]\ar[r]&0\\
      \CHL^r(A)\otimes\Z/{\ell^n}Z\ar^-{\overline{c}_{\ell^n}^r}[r]&\h{2r}(\overline{A},\mu_{\ell^n}^{\otimes r})^{\GalGrp_k}\ar[r]&\coker(\overline{c}_{\ell^n}^r)\ar[r]&0
    }
  \end{align*}

  Skorobogatov and Zarhin have shown \cite[Proposition\,2.2]{SkorobogatovZarhin2012} that with our assumptions the canonical homomorphism \(\rho_{\ell^n}^{2r,r}\) admits a section and hence is surjective.
  This implies that for each integer \(n\) the group \(\coker(\overline{c}_{\ell^n}^r)\) is quotient of \(\Br^r(A)[\ell^n]\).
  
  Since \(\Br^r(A)\{\ell\}=\bigcup_m\Br^r(A)[\ell^m]\) is finite, there exists an integer \(N\) such that \(\Br^r(A)\{\ell\}=\Br^r(A)[\ell^{N}]\).
  Since for all \(n\) the cardinality of the cokernels is bounded by \(\#\coker(\overline{c}_{\ell^n}^r)\leq\#\Br^r(A)[\ell^N]\), it follows that the limit \(\varprojlim_n\coker(\overline{c}_{\ell^n}^r)\) is a finite group.

  This implies that the cokernel of \(\varprojlim_n\overline{c}_{\ell^n}^r\) and hence the cokernel of \(\varprojlim_nc_{\ell^n}^r\otimes\mathrm{id}_{\Z_\ell}\) is finite.
  But this means that \(\varprojlim_nc_{\ell^n}^r\otimes\mathrm{id}_{\Q_\ell}\) and thus \(c_{\Q_\ell}^r\) are surjective.
\end{proof}

\begin{remark}
  With similar arguments Rosenschon and Srinivas \cite[Theorem\,1.4]{RosenschonSrinivas2013} have proven that for any smooth projective, geometrically connected variety \(X\) over a finite field \(\mathbb{F}_q\) and a prime \(\ell\neq\operatorname{char}\mathbb{F}_q\) the Tate conjecture \(\operatorname{TC}^r(X)_{\Q_\ell}\) for \(X\) at \(\ell\) in codimension \(r\) holds precisely when the group \(\Br^r(X)\{\ell\}\) is finite.
  Moreover, they constructed an integral cycle map
  \[c_{\Z_\ell}^r:\CHL^n(X)\otimes\Z_\ell\to\h{2r}(\overline{X},\Z_\ell(n))^{\GalGrp_{\mathbb{F}_q}}\]
  and showed that \(\TC^r(X)_{\Q_\ell}\) is equivalent to the surjectivity of \(c_{\Z_\ell}^r\).
  Note that the latter is not true when the Chow-L group is replaced by the Chow group.
  More precisely, there exists examples where the cycle map \(\CH^r(X)\otimes\Z_\ell\to\h{2r}(\overline{X},\Z_\ell(n))^{\GalGrp_{\mathbb{F}_q}}\) is not surjective; see \cite[Section\,2]{CTSzamuely2010}.

  So far the author is not aware of an answer to the question whether the converse of the above theorem holds.
  We also do not know whether in the situation of the Theorem the Tate conjecture implies surjectivity of an integral cycle map such as \(c_{\Z_\ell}^r\).

  Let \(k\subseteq\mathbb{C}\) is a field finitely generated over \(\Q\) and let \(X\) be a smooth projective geometrically integral \(k\)-variety.
  In that case Rosenschon and Srinivas \cite[Theorem\,1.2]{RosenschonSrinivas2013} have shown that the Tate conjecture for \(\overline{X}\) in codimension \(r\) at prime \(\ell\) is equivalent to the surjectivity of the integral cycle map
  \[\CHL^r(\overline{X})\otimes\Z_\ell\to\h{2n}(\overline{X},\Z_\ell(r))^{\GalGrp_k}\,.\]
\end{remark}

\section{Computing Higher Brauer Groups}
In view of the previous section it is desirable being able to compute higher Brauer groups, at least for abelian varieties.
The objective of this section is to extend efforts of computing transcendental Brauer groups of abelian varieties in characteristic zero to higher Brauer groups.

The main result of this section is the following theorem which for \(r=1\) was shown by Skorobogatov and Zarhin \cite[Corollary 2.3]{SkorobogatovZarhin2012}.

\begin{theorem}\label{thm:genskoza2.2}
  Let \(k\) be a field finitely generated over \(\Q\) and let \(A\) be an abelian variety over \(k\).
  For any integers \(i,n\geq1\) such that \((n,2r!)=1\) we have an isomorphism
  \[\Br^r(A)[n]\big/\Br_\mathrm{alg}^r(A)[n]\isom\h{2r}\left(\overline{A},\tensorpow{\mu_n}{r}\right)^{\GalGrp_k}\big/\left(\CHL^{r}\left(\overline{A}\right)\tensor\Z/n\Z\right)^{\GalGrp_k}\,.\]
\end{theorem}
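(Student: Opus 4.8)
The plan is to follow the argument of Skorobogatov and Zarhin for the case \(r=1\); the only input beyond formal manipulation is the universal coefficient sequence~\eqref{eq:coeffseqbr} together with the splitting of the restriction map already used in the proof of Theorem~\ref{thm:tateobst}. First I would reduce to the case where \(n=\ell^m\) is a prime power with \(\ell>2r\): the hypothesis \((n,2r!)=1\) says exactly that every prime dividing \(n\) is \(>2r\), so writing \(n=\prod_\ell\ell^{m_\ell}\) the sheaf \(\tensorpow{\mu_n}{r}\), the group \(\CHL^r(\overline A)\tensor\Z/n\Z\), and the group \(\Br^r(A)[n]\) all decompose compatibly as finite direct sums over their \(\ell\)-primary parts; since \(\GalGrp_k\)-invariants commute with finite direct sums, the general case follows from the prime-power case.

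So fix \(\ell>2r\) and \(m\geq1\) and set \(H(Y):=\h{2r}(Y,\tensorpow{\mu_{\ell^m}}{r})\) for \(Y\in\{A,\overline A\}\). Since \(\overline A\) is a smooth projective variety over \(\overline k\), the sequence~\eqref{eq:coeffseqbr} applies over both \(k\) and \(\overline k\) and gives exact sequences
\[0\to\CHL^r(A)\tensor\Z/\ell^m\Z\xrightarrow{\alpha}H(A)\xrightarrow{\pi}\Br^r(A)[\ell^m]\to0\]
and, of \(\GalGrp_k\)-modules,
\[0\to\CHL^r(\overline A)\tensor\Z/\ell^m\Z\xrightarrow{\overline\alpha}H(\overline A)\xrightarrow{\overline\pi}\Br^r(\overline A)[\ell^m]\to0.\]
As \(\overline\alpha\) is an injective \(\GalGrp_k\)-equivariant map with image \(\ker\overline\pi\), taking \(\GalGrp_k\)-invariants identifies \(\bigl(\CHL^r(\overline A)\tensor\Z/\ell^m\Z\bigr)^{\GalGrp_k}\) with \((\ker\overline\pi)\cap H(\overline A)^{\GalGrp_k}\), so \(\overline\pi\) induces an isomorphism
\[H(\overline A)^{\GalGrp_k}\big/\bigl(\CHL^r(\overline A)\tensor\Z/\ell^m\Z\bigr)^{\GalGrp_k}\isom\overline\pi\bigl(H(\overline A)^{\GalGrp_k}\bigr)\subseteq\Br^r(\overline A)[\ell^m].\]

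Next, by naturality of~\eqref{eq:coeffseqbr} under pullback along \(\overline A\to A\) one has \(\overline\pi\circ f=g\circ\pi\), where \(f\colon H(A)\to H(\overline A)\) is the pullback map and \(g\colon\Br^r(A)[\ell^m]\to\Br^r(\overline A)[\ell^m]\) the restriction, and \(f\) factors as \(H(A)\xrightarrow{\rho}H(\overline A)^{\GalGrp_k}\hookrightarrow H(\overline A)\) with \(\rho=\rho^{2r,r}_{\ell^m}\). Because \(\ell>2r\), \cite[Proposition\,2.2]{SkorobogatovZarhin2012} supplies a section of \(\rho\), hence \(\rho\) is surjective---this is the same ingredient used in the proof of Theorem~\ref{thm:tateobst}. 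Since \(\pi\) is surjective too, the image of \(g\) equals \(\overline\pi\bigl(f(H(A))\bigr)=\overline\pi\bigl(\rho(H(A))\bigr)=\overline\pi\bigl(H(\overline A)^{\GalGrp_k}\bigr)\), which by the displayed isomorphism is \(H(\overline A)^{\GalGrp_k}/\bigl(\CHL^r(\overline A)\tensor\Z/\ell^m\Z\bigr)^{\GalGrp_k}\). On the other hand \(\ker g=\Br_\mathrm{alg}^r(A)[\ell^m]\) by the definition of the algebraic higher Brauer group. The first isomorphism theorem for \(g\) then yields the asserted isomorphism for \(n=\ell^m\), and hence for all admissible \(n\) by the reduction above.

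The whole substance of the argument is carried by \cite[Proposition\,2.2]{SkorobogatovZarhin2012}, i.e.\ the splitting of \(\rho^{2r,r}_{\ell^m}\) for \(\ell>2r\): this is what forces the hypothesis \((n,2r!)=1\), and it is precisely the nontrivial part of Skorobogatov and Zarhin's original computation. The rest is bookkeeping; the one point needing a little care is the identification of the comparison map between the two quotients with the restriction \(g\colon\Br^r(A)\to\Br^r(\overline A)\), so that its kernel is exactly \(\Br_\mathrm{alg}^r(A)[\ell^m]\), and one should also check that \(\CHL^r\), \'etale cohomology with finite coefficients, and \(\Br^r\) are all compatible with the prime-power decomposition used in the reduction.
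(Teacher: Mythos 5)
Your proposal is correct and follows essentially the same route as the paper: the same commutative diagram built from the universal coefficient sequence~\eqref{eq:coeffseqbr} over \(k\) and \(\overline{k}\), with the split surjectivity of \(\rho^{2r,r}_{\ell^m}\) from \cite[Proposition\,2.2]{SkorobogatovZarhin2012} as the only nontrivial input, and the conclusion via comparing the image of the restriction map \(\Br^r(A)[n]\to\Br^r(\overline{A})[n]\) with the image of \(\overline\pi\) on Galois invariants. The preliminary reduction to prime powers \(\ell^m\) with \(\ell>2r\) is harmless but not needed, since the splitting result applies directly for any \(n\) coprime to \((2r)!\), which is how the paper proceeds.
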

\begin{proof}
  Restricting the exact sequence (\ref{eq:coeffseqbr}) with \(X=\overline{A}\) to the Galois invariants and using the exact sequence (\ref{eq:coeffseqbr}) with \(X=A\) we get the following commutative diagram with exact rows
  \begin{align}\label{diag:skozar}
    \xymatrix@C=0.3cm@R=0.5cm{
      0\ar[r]&\left(\CHL^{i}\left(\overline{A}\right)\tensor\Z/n\Z\right)^{\GalGrp_k}\ar[r]&\h{2i}\left(\overline{A},\tensorpow{\mu_n}{i}\right)^{\GalGrp_k}\ar^-\psi[r] & \Br^{i}(\overline{A})[n]^{\GalGrp_k}\\
      &&\h{2i}\left(A,\tensorpow{\mu_n}{i}\right)\ar[r]\ar^\alpha[u] & \Br^{i}(A)[n]\ar^\beta[u]\ar[r]&0\,.
    }
  \end{align}
  By \cite[Proposition\,2.2]{SkorobogatovZarhin2012} the homomorphism \(\alpha\) is split surjective.
  In that case the commutativity of the diagram implies that the images of \(\psi\) and \(\beta\) coincide which implies that the groups \(\Br^i(A)[n]/\Br_\mathrm{alg}^i(A)[n]\) and \(\h{2i}(\overline{A},\tensorpow{\mu_n}{i})^{\GalGrp_k}/(\CHL^{i}(\overline{A})\tensor\Z/\Z n)^{\GalGrp_k}\) are isomorphic.
\end{proof}

For products of elliptic curves we can prove even more precise formulae.
Let \(E_0,\dots, E_r\) be elliptic curves and set \(A=E_0\times E_1\times\dots\times E_r\).
Our aim is to obtain a better understanding of the higher Brauer group \(\Br^r(A)\).
For this we need some results concerning the Chow-L group \(\CHL^r(\overline{A})\) and the \'etale cohomology groups \(\h{2r}(\overline{A},\tensorpow{\mu_n}{r})\).

The map \(c^r_n:\CH^r(\overline{A})\otimes\Z/n\Z\to \h{2r}(\overline{A},\mu_n^{\otimes r})\) induced by the cycle map factors through the map \(\overline{c}_{n}^r:\CHL^r(\overline{A})\otimes\Z/n\Z\to\h{2r}(\overline{A},\mu_n^{\otimes r})\) which is induced by the composition of caninical homomorphisms (\ref{eq:compositioncycle}).

The isomorphisms \(\Hom(E_i[n],\Z/n\Z)\overset\isom\to\h1(\overline{E_i},\Z/n\Z)\) and the perfect Weil pairings furnish us with canonical isomorphisms (see \cite[Section 3]{SkorobogatovZarhin2012})
\begin{align*}
  \h1\left(\overline{E_i},\mu_n\right)\tensor\h1\left(\overline{E_j},\Z/n\Z\right)\overset\isom\to\Hom(E_i[n],E_j[n])\,. 
\end{align*}
Using these isomorphisms and the Künneth formula \cite[VI.~§8]{Milne1980} we obtain an isomorphism of \(\GalGrp_k\)-modules
  \begin{align*}
    (\Z/n\Z)^{r+1}\oplus\left(\bigoplus_{0\leq i<j\leq r}\Hom(E_i[n],E_j[n])\right) \overset\isom\to \h{2r}(\overline{A},\tensorpow{\mu_n}r)\,.
  \end{align*}

  Deeper investigation of the above isomorphism shows that each of the three \(\Z/n\Z\)-copies is generated by a fundamental class coming from one of the elliptic curves \(\overline{E_i}\).

\begin{lemma}\label{lm:z3inchl2}
  The group \(\CHL^r(\overline{A})\tensor\Z/n\Z\) contains a direct summand \((\Z/n\Z)^{r+1}\) generated by the classes coming from the elliptic curves.
\end{lemma}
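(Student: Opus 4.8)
The plan is to exhibit the summand explicitly by pushing forward fundamental classes along the structure maps of the product. For each $i$, let $\pi_i : \overline{A} \to \overline{E_i}$ be the projection and let $\iota_i : \overline{E_i} \hookrightarrow \overline{A}$ be the inclusion as a fibre over the origin in the other factors (or any point). More useful here: for each $i$ there is a closed subvariety $Z_i \subseteq \overline{A}$ of codimension $r$ obtained as a product of $r$ of the elliptic curves together with a single point in the remaining factor $\overline{E_i}$; its class $[Z_i] \in \CH^r(\overline{A})$ maps under the cycle map to (up to sign and torsion) the fundamental class of the corresponding $\Z/n\Z$-copy described just before the lemma statement. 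First I would record that the classes $[Z_0],\dots,[Z_r]$ lie in the image of $\CH^r(\overline{A}) \to \CHL^r(\overline{A})$, hence in $\CHL^r(\overline{A})$, and reduce them modulo $n$.

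Next I would show these $r+1$ classes are independent and split off a direct summand. The cleanest argument is to compare with cohomology: the composite
\[
(\Z/n\Z)^{r+1} \longrightarrow \CHL^r(\overline{A}) \tensor \Z/n\Z \xrightarrow{\ \overline{c}^r_n\ } \h{2r}(\overline{A}, \tensorpow{\mu_n}{r})
\]
sends the $i$-th generator to the $i$-th fundamental class, and by the explicit description of the Künneth decomposition these $r+1$ fundamental classes span a direct summand $(\Z/n\Z)^{r+1}$ of $\h{2r}(\overline{A},\tensorpow{\mu_n}{r})$. Since a map of finitely generated abelian groups that becomes the inclusion of a direct summand after composing with some map is itself split injective onto a direct summand, this gives the claim — provided we know the source map $(\Z/n\Z)^{r+1} \to \CHL^r(\overline{A})\tensor\Z/n\Z$ is injective, which follows from the injectivity of the composite into cohomology.

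The main obstacle is making precise that the cycle classes $[Z_i]$ really do hit the intended $\Z/n\Z$-factors of the Künneth decomposition rather than mixing with the $\Hom(E_i[n],E_j[n])$-part. This requires unwinding the isomorphisms recalled from \cite[Section 3]{SkorobogatovZarhin2012}: the point is that a class supported on a product of the $E_j$ with a point in $E_i$ is pure of the relevant Künneth type, so its image has no component in the $\h1(\overline{E_i},\mu_n)\tensor\h1(\overline{E_j},\Z/n\Z)$ summands. A secondary technical point is that $\overline{c}^r_n$ need not be injective in general, so one cannot simply transport a splitting backwards; this is why I would instead argue directly that the $(\Z/n\Z)^{r+1}$ generated by the $[Z_i]$ maps isomorphically onto a summand of cohomology, which forces it to be a summand of $\CHL^r(\overline{A})\tensor\Z/n\Z$ as well. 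I expect no serious difficulty from the hypothesis $(n, 2r!) = 1$; it is inherited from the ambient setup and guarantees the relevant denominators in the Künneth and Weil-pairing identifications are invertible.
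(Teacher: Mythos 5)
Your proposal is correct and follows essentially the same route as the paper: take the classes of the elliptic curves in \(\CH^r(\overline{A})\), use that the cycle map to \(\h{2r}(\overline{A},\tensorpow{\mu_n}{r})\) factors through \(\CHL^r(\overline{A})\tensor\Z/n\Z\), and conclude split injectivity because the composite with the projection onto the \((\Z/n\Z)^{r+1}\) K\"unneth summand is the identity. One slip to fix: your \(Z_i\), described as a product of \(r\) of the curves with a point in the remaining factor, is a divisor (codimension \(1\)), not a codimension-\(r\) cycle; the intended generators are the curves \(\overline{E_i}\) themselves, embedded with a point in each of the other factors, whose classes are external products and hence of pure K\"unneth type, which also disposes of the ``main obstacle'' you raise.
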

\begin{proof}
  The group \(\CH^r(\overline{A})\otimes\Z/n\Z\) has the direct summand \((\Z/n\Z)^r\) which is generated by the classes \([\overline{E_i}]\) of the elliptic curves.
  The composition
  \[(\Z/n\Z)^{r+1}\to\CH^r(\overline{A})\tensor\Z/n\Z\to\h{2r}(\overline{A},\tensorpow{\mu_n}{r})\to(\Z/n\Z)^{r+1}\,,\]
  where the first map is \((\lambda_0,\lambda_1,\dots,\lambda_r)\mapsto \sum_{i=0}^r\lambda_i[E_i]\) and the last map is the obvious projection to a direct summand, is the identity map.
  Since the involved cycle map and thus the whole composition factors through \(\CHL^r(\overline{A})\tensor\Z/n\Z\), this implies that \(\CHL^r(\overline{A})\tensor\Z/n\Z\) has a direct summand \((\Z/n\Z)^{r+1}\) generated by classes coming from the elliptic curves. 
\end{proof}

In the case \(r=1\) the following theorem and its applications presented below were shown by Skorobogatov and Zarhin \cite{SkorobogatovZarhin2012}.

\begin{theorem}\label{thm:br2ellcurves}
  Let \(R_{n}\) be the quotient \(\left(\CHL^r(\overline{A})\tensor\Z/n\Z\right)\big/\left(\Z/n\Z\right)^{r+1}\) and let \(H_{n}\) be the \(\GalGrp_k\)-module \(\bigoplus_{0\leq i< j\leq r}\Hom(E_i[n],E_j[n])\).
  Then:
  \begin{enumerate}[(i)]
  \item For each integer \(n\) there exists an isomorphism
    \[H_{n}/R_{n}\overset\isom\to \Br^r(\overline{A})[n]\,.\]
  \item For each integer \(n\) not divisible by  primes \(\ell\leq r\) we get an isomorphism
    \[\Br^r(A)[n]/\Br^r_\mathrm{alg}(A)[n]\isom H_{n}^{\GalGrp_k}/R_{n}^{\GalGrp_k}\,.\]
  \end{enumerate}
\end{theorem}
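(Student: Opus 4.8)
The plan is to read off both parts from inputs already in hand: the universal coefficient sequence~(\ref{eq:coeffseqbr}), the $\GalGrp_k$-equivariant K\"unneth decomposition $\h{2r}(\overline{A},\tensorpow{\mu_n}{r})\isom(\Z/n\Z)^{r+1}\oplus H_{n}$ recorded above, and Lemma~\ref{lm:z3inchl2}; for part~(ii) one additionally uses Theorem~\ref{thm:genskoza2.2}. Write $V\subseteq\CHL^r(\overline{A})\tensor\Z/n\Z$ for the direct summand isomorphic to $(\Z/n\Z)^{r+1}$ supplied by Lemma~\ref{lm:z3inchl2}, so that $R_{n}\isom(\CHL^r(\overline{A})\tensor\Z/n\Z)/V$ by definition. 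The observation that drives the argument is already contained in the proof of Lemma~\ref{lm:z3inchl2}: the composite of $\overline{c}_n^r|_V$ with the K\"unneth projection $\h{2r}(\overline{A},\tensorpow{\mu_n}{r})\to(\Z/n\Z)^{r+1}$ is an isomorphism. Hence $\overline{c}_n^r(V)\cap H_{n}=0$, and comparing orders gives a direct sum decomposition $\h{2r}(\overline{A},\tensorpow{\mu_n}{r})=\overline{c}_n^r(V)\oplus H_{n}$.

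For part~(i) I would feed~(\ref{eq:coeffseqbr}) with $X=\overline{A}$, which presents $\Br^r(\overline{A})[n]$ as $\coker(\overline{c}_n^r)$, and compute this cokernel using the decomposition $\h{2r}(\overline{A},\tensorpow{\mu_n}{r})=\overline{c}_n^r(V)\oplus H_{n}$. Since $\overline{c}_n^r(V)\subseteq\im(\overline{c}_n^r)$, the cokernel is $H_{n}$ modulo the image of $\im(\overline{c}_n^r)$ under the projection $q:\h{2r}(\overline{A},\tensorpow{\mu_n}{r})\to H_{n}$. Picking any complement $C$ of $V$ in $\CHL^r(\overline{A})\tensor\Z/n\Z$, the map $q\circ\overline{c}_n^r$ is injective on $C$---a kernel element would lie in $\overline{c}_n^r(V)$, hence in $V\cap C=0$---so it embeds $R_{n}\isom C$ into $H_{n}$ and yields $\Br^r(\overline{A})[n]\isom\coker(\overline{c}_n^r)\isom H_{n}/R_{n}$.

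For part~(ii) the same computation is performed $\GalGrp_k$-equivariantly. The group $\GalGrp_k$ acts trivially on $V$---which is spanned by the classes of the $k$-subvarieties $E_i\subseteq A$---and on the K\"unneth summand $(\Z/n\Z)^{r+1}$, spanned by fundamental classes of the $\overline{E_i}$; and $\overline{c}_n^r$ followed by the K\"unneth projection is a $\GalGrp_k$-equivariant retraction onto $V$. Hence $\CHL^r(\overline{A})\tensor\Z/n\Z\isom V\oplus R_{n}$ and $\h{2r}(\overline{A},\tensorpow{\mu_n}{r})=\overline{c}_n^r(V)\oplus H_{n}$ are decompositions of $\GalGrp_k$-modules, and passage to invariants commutes with both sums. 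Theorem~\ref{thm:genskoza2.2} identifies $\Br^r(A)[n]/\Br^r_\mathrm{alg}(A)[n]$ with the quotient $\h{2r}(\overline{A},\tensorpow{\mu_n}{r})^{\GalGrp_k}/(\CHL^r(\overline{A})\tensor\Z/n\Z)^{\GalGrp_k}$, which the two invariant decompositions turn into $H_{n}^{\GalGrp_k}/R_{n}^{\GalGrp_k}$, using once more that $q\circ\overline{c}_n^r$ is injective---now on $R_{n}^{\GalGrp_k}$.

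The only ingredient that is not purely formal is the observation of the first paragraph: $\overline{c}_n^r$ does not carry $V$ onto the K\"unneth copy of $(\Z/n\Z)^{r+1}$ in $\h{2r}(\overline{A},\tensorpow{\mu_n}{r})$ on the nose, only onto a graph over it, which forces one to work throughout with the decomposition $\h{2r}(\overline{A},\tensorpow{\mu_n}{r})=\overline{c}_n^r(V)\oplus H_{n}$ rather than with the original K\"unneth one. The step I expect to require the most care is verifying, for part~(ii), that this adjusted decomposition---and the corresponding splitting of $\CHL^r(\overline{A})\tensor\Z/n\Z$---consists of $\GalGrp_k$-submodules, so that passing to $\GalGrp_k$-invariants behaves as claimed; this comes down to the $\GalGrp_k$-equivariance of $\overline{c}_n^r$ and of the K\"unneth isomorphism, but it is the load-bearing point of the proof.
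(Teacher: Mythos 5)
Your proof is correct and takes essentially the same route as the paper: part (i) is the paper's ``strip the \((\Z/n\Z)^{r+1}\) summands from sequence (\ref{eq:coeffseqbr})'' argument, with the welcome extra care of working with the decomposition \(\overline{c}_n^r(V)\oplus H_n\) rather than assuming \(\overline{c}_n^r(V)\) is the K\"unneth copy on the nose. For part (ii) the paper reruns the diagram chase of Theorem~\ref{thm:genskoza2.2} with \(R_n^{\GalGrp_k}\) and \(H_n^{\GalGrp_k}\) and the split surjection onto \(H_n^{\GalGrp_k}\), whereas you apply Theorem~\ref{thm:genskoza2.2} as stated and then remove the summands \(\GalGrp_k\)-equivariantly; this is only a repackaging, resting on the same inputs (K\"unneth, Lemma~\ref{lm:z3inchl2}, and the split surjectivity from Skorobogatov--Zarhin).
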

\begin{proof}
The exact sequence (\ref{eq:coeffseqbr}) with \(X=A\) and \(i=2\) stays exact when we remove the \((\Z/n\Z)^{r+1}\) summands in the first two terms, i.e. we have an exact sequence \(0\to R_{n}\to H_{n}\to\Br^r(\overline{A})[n]\to0\) proving (i).

For (ii) consider the commutative diagram with exact rows
\begin{align}
  \xymatrix@R=0.5cm@C=0.5cm{
    0\ar[r]&R_{n}^{\GalGrp_k}\ar[r]& H_{n}^{\GalGrp_k}\ar^\psi[r] & \Br^{r}(\overline{A})[n]^{\GalGrp_k}\\
    &&\h{2r}(A,\tensorpow{\mu_n}{r})\ar[r]\ar^\alpha[u] & \Br^{r}(A)[n]\ar^\beta[u]\ar[r]&0
  }
\end{align}
where the vertical arrow \(\alpha\) is the composition of the canonical maps \(\h4(A,\tensorpow{\mu_n}r)\to\h{2r}(\overline{A},\tensorpow{\mu_n}r)^{\GalGrp_k}\) and \(\h{2r}(\overline{A},\tensorpow{\mu_n}r)^{\GalGrp_k}\to H_{n}^{\GalGrp_k}\).
If \(n\) is not divisible by a prime less or equal \(r+1\), the first arrow is split surjective by \cite[Proposition\,2.2]{SkorobogatovZarhin2012} and composing the section of the second with the section of the first arrow we get a section for \(\alpha\).
With the arguments of the proof of Theorem~\ref{thm:genskoza2.2} we get that \(\Br^r(A)[n]/\Br_\mathrm{alg}^r(A)[n]\) is isomorphic to the group \(H_{n}^{\GalGrp_k}/R_{n}^{\GalGrp_k}\).
\end{proof}

We now consider the special case of a triple product \(A=E\times E\times E\) of a elliptic curve \(E\) and \(r=2\).
The groups \(R_{n}\) and \(H_{n}\) are defined as in Theorem\,\ref{thm:br2ellcurves}.

\begin{proposition}
  Let \(E\) be an elliptic curve such that for each prime \(\ell\neq 2,3\) the image of \(\GalGrp_k\) in \(\mathrm{Aut}(E_\ell)\) is \(\mathrm{GL}(2,\F_\ell)\).
  Then  \(\Br^2(A)\{\ell\}=\Br^2_\mathrm{alg}(A)\{\ell\}\).
\end{proposition}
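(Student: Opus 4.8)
The plan is to reduce the statement, via Theorem~\ref{thm:br2ellcurves}(ii), to a computation of Galois invariants, and then to carry out that computation using that the Galois action on the torsion of \(E\) is as large as possible.

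Fix a prime \(\ell\neq2,3\) and an integer \(m\geq1\), and set \(n=\ell^m\); then \(n\) is coprime to \(6\), so Theorem~\ref{thm:br2ellcurves}(ii) applies to \(A=E\times E\times E\) with \(r=2\) and gives an isomorphism
\[\Br^2(A)[n]\big/\Br^2_\mathrm{alg}(A)[n]\isom H_{n}^{\GalGrp_k}\big/R_{n}^{\GalGrp_k}\,,\]
where \(H_{n}=\bigoplus_{0\leq i<j\leq2}\Hom(E_i[n],E_j[n])\) and \(R_{n}\) is, by construction (Lemma~\ref{lm:z3inchl2}), the image in \(H_n\) of \(\CHL^2(\overline{A})\tensor\Z/n\Z\) modulo the summand \((\Z/n\Z)^{3}\) generated by the elliptic curves; in particular \(R_n\subseteq H_n\) is a \(\GalGrp_k\)-submodule, so \(R_n^{\GalGrp_k}=R_n\cap H_n^{\GalGrp_k}\). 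Since the inclusion \(R_n^{\GalGrp_k}\subseteq H_n^{\GalGrp_k}\) is automatic, it suffices to prove \(H_n^{\GalGrp_k}\subseteq R_n\) for every \(m\); taking the union over \(m\) then yields \(\Br^2(A)\{\ell\}=\Br^2_\mathrm{alg}(A)\{\ell\}\).

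First I would determine \(H_n^{\GalGrp_k}\). As \(E_0=E_1=E_2=E\), each of the three summands of \(H_n\) is \(\End(E[n])\cong M_2(\Z/n\Z)\), with \(\GalGrp_k\) acting by conjugation through the mod-\(n\) representation \(\rho_n\colon\GalGrp_k\to\mathrm{GL}(2,\Z/n\Z)\) of \(E\). By hypothesis \(\rho_\ell\) is surjective, and since \(\ell\geq5\) a well-known consequence of Serre's work on \(\ell\)-adic representations of elliptic curves shows that the image of \(\rho_n\) contains \(\mathrm{SL}(2,\Z/n\Z)\) for all \(m\). A matrix commuting with all of \(\mathrm{SL}(2,\Z/n\Z)\) is scalar, so \(\End(E[n])^{\GalGrp_k}=\Z/n\Z\cdot\mathrm{id}\), and hence \(H_n^{\GalGrp_k}=\bigl(\Z/n\Z\cdot\mathrm{id}\bigr)^{\oplus3}\).

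Next I would check that each scalar copy \(\Z/n\Z\cdot\mathrm{id}\) already lies in \(R_n\). For a pair \((i,j)\) let \(C_{ij}\subset A\) be the curve obtained from the diagonal of \(E_i\times E_j\) by inserting the zero section in the remaining factor (for instance \(C_{01}=\{(x,x,0):x\in E\}\)); it is an algebraic cycle of codimension \(2\), hence a class in \(\CH^2(\overline{A})\). By the Künneth formula its image in \(\h4(\overline{A},\tensorpow{\mu_n}{2})\) is built from the class of the diagonal of \(E_i\times E_j\): under the identifications recalled before Lemma~\ref{lm:z3inchl2}, the mixed \(\h1\tensor\h1\)-component of that diagonal class corresponds to a unit multiple of \(\mathrm{id}\in\Hom(E_i[n],E_j[n])\), while its remaining components are fundamental classes of elliptic curves (cf.\ \cite[Section~3]{SkorobogatovZarhin2012}). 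Thus, in the decomposition \(\h4(\overline{A},\tensorpow{\mu_n}{2})\isom(\Z/n\Z)^{3}\oplus H_n\), the class of \(C_{ij}\) is a unit multiple of \(\mathrm{id}\) in the \((i,j)\)-summand plus a term in the \((\Z/n\Z)^{3}\)-summand. Since the cycle map on \(\CH^2(\overline{A})\) factors through \(\overline c_n^2\colon\CHL^2(\overline{A})\tensor\Z/n\Z\to\h4(\overline{A},\tensorpow{\mu_n}{2})\), the image of \(C_{ij}\) in \(R_n\) is a unit multiple of \(\mathrm{id}\) in the \((i,j)\)-summand, and since units generate \(\Z/n\Z\) we get \(\bigl(\Z/n\Z\cdot\mathrm{id}\bigr)^{\oplus3}\subseteq R_n\), i.e.\ \(H_n^{\GalGrp_k}\subseteq R_n\). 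The two steps carrying the real content are the appeal to Serre's lemma, which makes the scalar computation of invariants available for every power \(\ell^m\) rather than only for \(\ell\) itself, and the precise Künneth description of the diagonal class identifying its mixed component with a unit times the identity homomorphism; the latter is exactly the computation already carried out in \cite[Section~3]{SkorobogatovZarhin2012} for \(r=1\), and inserting a zero section in the third factor only adds a term in the \((\Z/n\Z)^{r+1}\)-part, so it transfers without change. I expect this Künneth bookkeeping to be the main point demanding care.
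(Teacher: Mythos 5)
Your proposal is correct and follows essentially the same route as the paper: reduce via Theorem~\ref{thm:br2ellcurves}(ii) to showing \(H_n^{\GalGrp_k}\subseteq R_n\), identify \(H_n^{\GalGrp_k}\) with the three scalar copies of the identity, and exhibit these in \(R_n\) through the classes of the diagonal-type curves \(\{(x,x,0)\}\), \(\{(x,0,x)\}\), \(\{(0,x,x)\}\) (the paper's map \(\phi\) evaluated at identity endomorphisms) via the same Künneth bookkeeping. The only cosmetic difference is that you derive the invariant computation from Serre's lemma on mod-\(\ell^m\) images, where the paper cites \cite[Lemma 4.4]{SkorobogatovZarhin2012} (and notes non-CM), which amounts to the same content.
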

\begin{proof}
  First note, that as \(\mathrm{GL}(2,\F_\ell)\) is not solvable for \(\ell\geq5\), the elliptic curve of the proposition is without complex multiplication, i.e. \(\mathrm{End}(\overline{E})=\Z\), c.f. an argument given in \cite[Proposition\,4.1]{SkorobogatovZarhin2012}.

  In view of Theorem~\ref{thm:br2ellcurves} it suffices to prove that \(H_{n}^{\GalGrp_k}/R_{n}^{\GalGrp_k}\) is trivial.
Application of \cite[Lemma 4.4]{SkorobogatovZarhin2012} to \(H_{n}^{\GalGrp_k}\) yields that \(H_{n}^{\GalGrp_k}\isom (\Z/\ell^n\Z)^3\) (generated by the identity endomorphisms in the copies of \(\mathrm{End}(\overline{E})\) in \(H_{n}\)).
It therefore suffices to prove that \(R_{n}^{\GalGrp_k}\isom(\Z/\ell^n\Z)^3\).

  Consider the composition of canonical maps
  \begin{align*}
    (\mathrm{End}(\overline{E})/n)^3\overset{\phi}\to&\CH^2(\overline{A})/n\overset{\iota}\to\CHL^2(\overline{A})/n\\\overset{c_n^2}\to&\h4(\overline{A},\tensorpow{\mu_n}2)\to\mathrm{End}(E[n])^3
  \end{align*}
  where \(\phi\) is given by \[(f,g,h)\mapsto [\{(x,f(x),0)\}]+[\{(x,0,g(x))\}]+[\{(0,x,h(x))\}]\,.\]
The two outer groups are \((\Z/n\Z)^3\) (generated by the identity maps) and one easily checks that this composition is the identity.
This implies that \(\iota\phi\) is injective and, as its image is contained in \(R_{n}\), that \(R_{n}\) contains \((\Z/n\Z)^3\).
\end{proof}

For our second application  let \(E\) be an elliptic curve over \(\Q\) with complex multiplication.
For an odd prime \(\ell\) the curve \(E\) has no rational isogeny of degree \(\ell\), if and only if \(E[\ell]\) does not contain a Galois invariant subgroup of order \(\ell\).

\begin{proposition}
  Let \(\ell\) be an odd prime such that \(E\) has no rational isogeny of degree \(\ell\).
  Then the group \(\Br^2(\overline{A})[\ell]^{\GalGrp_\Q}\) is trivial.
\end{proposition}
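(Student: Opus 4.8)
The plan is to deduce the statement from Theorem~\ref{thm:br2ellcurves}(i) by reducing it to two facts about the mod-$\ell$ Galois module $E[\ell]$; write $\overline{\rho}_\ell\colon\GalGrp_\Q\to\mathrm{GL}_2(\F_\ell)=\mathrm{Aut}(E[\ell])$ for the mod-$\ell$ representation. First I would record that the image $\overline{\rho}_\ell(\GalGrp_\Q)$ has order prime to $\ell$: since $E$ has complex multiplication, $\overline{\rho}_\ell(\GalGrp_\Q)$ lies in the normalizer of the Cartan subgroup $(\End(\overline{E})/\ell)^\times\subseteq\mathrm{GL}_2(\F_\ell)$, and the hypothesis that $E$ has no rational isogeny of degree $\ell$ forces $\ell$ to be unramified in the CM field and coprime to the conductor of $\End(\overline{E})$ --- otherwise $\End(\overline{E})/\ell$ would carry a canonical minimal ideal whose image in $E[\ell]$ is a $\GalGrp_\Q$-stable line --- so that this Cartan subgroup is \'etale of order dividing $\ell^2-1$ and the claim follows. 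Consequently the $\F_\ell[\GalGrp_\Q]$-modules appearing below, all of which are subquotients of $\End(E[\ell])^{\oplus3}$ and so have $\GalGrp_\Q$ acting through $\overline{\rho}_\ell(\GalGrp_\Q)$, are semisimple, the functor of $\GalGrp_\Q$-invariants on them is exact, and every $\GalGrp_\Q$-module quotient is a direct summand.

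Secondly I would show $\End(E[\ell])^{\GalGrp_\Q}=\F_\ell\cdot\mathrm{id}$, equivalently $\End_{\F_\ell[\GalGrp_\Q]}(E[\ell])=\F_\ell$. By hypothesis $E[\ell]$ contains no $\GalGrp_\Q$-stable line, so it is an irreducible $\F_\ell[\GalGrp_\Q]$-module, and by Schur's lemma its endomorphism ring is a finite field $\F_{\ell^d}$ with $d\mid\dim_{\F_\ell}E[\ell]=2$. If $d=2$, then $E[\ell]$ is one-dimensional over $\F_{\ell^2}$ and $\GalGrp_\Q$ acts $\F_{\ell^2}$-linearly; a complex conjugation $\tau\in\GalGrp_\Q$ then acts as a scalar $\lambda\in\F_{\ell^2}^\times$ with $\lambda^2=1$, whence $\overline{\rho}_\ell(\tau)=\pm\mathrm{id}$ and $\det\overline{\rho}_\ell(\tau)=1$; this contradicts $\det\overline{\rho}_\ell(\tau)=-1$, the value at $\tau$ of the mod-$\ell$ cyclotomic character, which is nontrivial because $\ell$ is odd. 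Hence $d=1$.

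Finally I would combine the two. By Theorem~\ref{thm:br2ellcurves}(i) (more precisely, the short exact sequence $0\to R_\ell\to H_\ell\to\Br^2(\overline{A})[\ell]\to0$ exhibited in its proof) it suffices to understand $H_\ell^{\GalGrp_\Q}$ and $R_\ell^{\GalGrp_\Q}$, where $H_\ell=\End(E[\ell])^{\oplus3}$ carries the coordinatewise conjugation action and $R_\ell$ is the image of $\CHL^2(\overline{A})\tensor\Z/\ell\Z$ modulo the classes of the elliptic curves. By the second step $H_\ell^{\GalGrp_\Q}=(\F_\ell\cdot\mathrm{id})^{\oplus3}$ is three-dimensional. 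On the other hand the three curves $\{(x,x,0)\}$, $\{(x,0,x)\}$, $\{(0,x,x)\}$ in $A=E\times E\times E$ are defined over $\Q$, so their classes in $R_\ell$ are $\GalGrp_\Q$-invariant, and by the cycle-class computation in the proof of the preceding proposition (applied to the scalars $\Z/\ell\Z\subseteq\End(\overline{E})/\ell$) they have linearly independent images in $H_\ell$; since $H_\ell^{\GalGrp_\Q}$ is three-dimensional these images span it, so $R_\ell^{\GalGrp_\Q}=H_\ell^{\GalGrp_\Q}$. As the functor of $\GalGrp_\Q$-invariants is exact here (first step), this gives $\Br^2(\overline{A})[\ell]^{\GalGrp_\Q}=H_\ell^{\GalGrp_\Q}/R_\ell^{\GalGrp_\Q}=0$.

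The main obstacle I anticipate is not conceptual but lies in the last step: one must verify that the graph cycles $\{(x,x,0)\}$ and so on genuinely lie in $R_\ell$ --- i.e.\ that their classes survive the passage modulo the $(\Z/\ell\Z)^{r+1}$ part --- and that they hit independent lines of $H_\ell=\End(E[\ell])^{\oplus3}$; this is exactly the bookkeeping already carried out in the proof of the preceding proposition and only needs to be transcribed to the present setting. A secondary, genuinely arithmetic point is the assertion that the no-isogeny hypothesis excludes $\ell$ ramified in the CM field or dividing the conductor, which is needed in order to invoke semisimplicity; it is a short case check, using that otherwise a canonical $\GalGrp_\Q$-stable line would appear in $E[\ell]$.
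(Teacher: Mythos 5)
Your proposal is correct and follows essentially the same route as the paper: use semisimplicity to take Galois invariants exactly in the coefficient sequence, identify \(\End(E[\ell])^{\GalGrp_\Q}\) with the scalars \(\F_\ell\cdot\mathrm{id}\), and observe that all invariants are accounted for by algebraic classes coming from the three diagonals, which are defined over \(\Q\). The only real difference is that you prove the two inputs the paper quotes or asserts: the centralizer computation (cited there as \cite[Theorem 4.6]{SkorobogatovZarhin2012}) you derive from Schur's lemma together with the determinant of complex conjugation, and the semisimplicity you justify by showing the mod-\(\ell\) image has order prime to \(\ell\) via the no-isogeny hypothesis; working with the sequence \(0\to R_\ell\to H_\ell\to\Br^2(\overline{A})[\ell]\to0\) from Theorem~\ref{thm:br2ellcurves} instead of the full sequence (\ref{eq:coeffseqbr}) is an equivalent bookkeeping choice.
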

\begin{proof}
  The \(\GalGrp_\Q\)-module \(\h4(\overline{A},\tensorpow{\mu_n}2)\isom(\Z/\ell\Z)^3\oplus\mathrm{End}(E_\ell)^3\) is semisimple since \(\mathrm{End}(E[\ell])\) is semisimple.
  This implies that the exact sequence (\ref{eq:coeffseqbr}) yields an isomorphism \(\h4(\overline{A},\tensorpow{\mu_n}2)\isom (\CHL^2(\overline{A})\tensor\Z/n\Z)\oplus \Br^2(\overline{A})[\ell]\).

  Denote by \(G_\ell\) the image of \(\GalGrp_\Q\) in \(\mathrm{End}(E[\ell])\).
  It is known that the centraliser of \(G_\ell\) in \(\mathrm{End}(E[\ell])\) is isomorphic to \(\Z/\ell\Z\) \cite[Theorem 4.6]{SkorobogatovZarhin2012}.
  On the other hand, the centraliser equals the subgroup \(\mathrm{End}(E[\ell])^{\GalGrp_\Q}\) of Galois invariant endomorphisms and contains all the \(\Z/\ell\Z\) multiples of the identity endomorphism which come from the multiples of the three diagonals in \(\overline{A}\).
  Thus, we have the inclusion \(\h4(\overline{A},\tensorpow{\mu_\ell}2)^{\GalGrp_\Q}\subseteq \CHL^2(\overline{A})\tensor\Z/\ell\Z\) proving the triviality of \(\Br^2(\overline{A})[\ell]\).
\end{proof}

\section*{Acknowledgement}
The author thanks Andreas Rosenschon for sugesting to work on these questions.
Parts of this paper were written while the author was member of DFG Sonderforschungsbereich~701 at Bielefeld University.

\bibliography{abel3.bib}
\bibliographystyle{amsplain}

\end{document}